\newcommand{\bburl}[1]{\textcolor{blue}{\url{#1}}}
\newcommand{\monthyear}[1]{%
  \def\@monthyear{\uppercase{#1}}}
\newcommand{\volnumber}[1]{%
  \def\@volnumber{\uppercase{#1}}}
\theoremstyle{plain}
\numberwithin{equation}{section} 
\newtheorem{thm}{Theorem}[section] 
\newtheorem{theorem}[thm]{Theorem}
\newtheorem{lemma}[thm]{Lemma}
\newtheorem{example}[thm]{Example}
\newtheorem{definition}[thm]{Definition}
\newtheorem{remark}[thm]{Remark}
\newtheorem{comment}[thm]{Comment}
\numberwithin{table}{section} 
\numberwithin{figure}{section}
\begin{document}

\monthyear{Month Year}
\volnumber{Volume, Number}
\setcounter{page}{1}

\title{Proof of Convergence of a Laplace Expansion Algorithm For Calculating Recursions Satisfied by a Family of Determinants} 

\author{
\name{Russell Jay Hendel \thanks{Email address: rhendel@towson.edu}}
\affil{Department of Mathematics,
                Towson University, 
                Towson, MD USA }}

\maketitle

{\bf Article type}: research 
\bigskip

\begin{abstract}
In Evans and Hendel's recent proof of  an outstanding conjecture on the resistance distances of a family of linear 3-trees, a key technique in the proof was calculating the recursion satisfied by a family of determinants. The underlying algorithm employed to prove the conjecture converged (i.e., terminated) in the particular case studied, and the paper presented an open question on when such a procedure converges in general. This paper proves  the convergence of a Laplace expansion procedure for an arbitrary family of determinants of banded, square, Toeplitz matrices. A comparison of the procedure presented in this paper, the paper by Evans and Hendel, and a paper by Jia, Yang, and Li is presented. 
\end{abstract}

\begin{keywords}
recursion; families of matrices; determinants;    Toeplitz
\end{keywords}

\section{Introduction}\label{sec:introduction}
A recent paper \cite{Proof}  proving an open conjecture of Barrett, Evans, and Francis \cite{Barrett}  regarding the asymptotic behavior of the resistance distance of a straight linear 3-tree, introduced a formal procedure for calculating the characteristic polynomial, or equivalently, the recursion satisfied by a family of determinants. The procedure converged (terminated) in that particular case and the authors, in the conclusion of the paper, state, ``The formal procedure introduced seems to have independent interest in its own right and may be applicable to a wider variety of graph families whose adjacency matrices are banded (or nearly banded). Whether the procedure converges, as well as how one might improve its efficiency, remain open questions.'' 

The main theorem (Theorem \ref{the:main}) of this paper provides a specific sequence of Laplace expansions that, when applied to an arbitrary    family of determinants whose underlying matrix family is  banded, square, and Toeplitz,  will always  converge, allowing the calculation of the recursion satisfied by the family of determinants.
The next five sections  provide the  notation, important background material, definitions, basic lemmas, and an illustrative example, after which the main theorem of this paper is precisely formulated and proven. 

\section{Notation and Conventions}\label{sec:notation}
This section and the next present the definitions, notations, and conventions used throughout the paper. We begin with the following definition.

\begin{definition}
Integer $R$ is said to be the  \textit{Toeplitz order}     of a  banded, square, Toeplitz matrix $M,$   if $R$ is the smallest integer such that for $i > R$,  $M_{i,1} =0$ and $M_{1,i}=0.$ 
\end{definition}

\begin{example} It follows that tri-diagonal and penta-diagonal Toeplitz matrices have Toeplitz orders 2 and 3 respectively. \end{example}

If $A$ and $B$ are sets of indices (or singleton indices) and $M$ is an arbitrary $n \times n$ matrix, $n \ge 1,$ then we let $(A;B) M$ indicate the matrix obtained from $M$ by deleting the rows whose indices are in $A$ and deleting the columns whose indices are in $ B$.     The operator $(0;0)$ indicates the identity operator (mnemonically, we remove row 0 and column 0 that do not exist; hence, we leave the family as is). The equality of operators is defined in the usual way: $(A;B)=(A';B')$ means $(A;B)M = (A';B') M.$ Operators act from right to left (the inner operator first). 

\textbf{Conventions.} Throughout the paper, 
\begin{equation}\label{equ:AB}
\left. \begin{aligned}
	\qquad & \text{ A general operator is notationally indicated as } (A;B) & \qquad\\
	\qquad & \text{ with } A=(A_1,\dotsc, A_n), \text{ and } B=(B_1,\dotsc, B_n);  & \qquad \\
	\qquad & \text{ without loss of generality we assume }  & \qquad \\ 
	\qquad & A_1 \le A_2 \le \dotsc \le A_n \text{ and } B_1 \le B_2 \le \dotsc \le B_n.  & \qquad\\
	\qquad & \text{ We omit parentheses and  write } (A_1, \dotsc, A_n; B_1, \dotsc, B_n).  & \qquad
\end{aligned}
\right \}
\end{equation} 

 If $M$ is a matrix, then, as usual, we let $M_{i,j}$ refer to the entry of $M$ in row $i$ and column $j,$ and  we let both $\det(M)$ and $M^*$ denote the determinant of $M$.

Reference \cite{Bapat} gives more traditional notation. However, this paper selects a notation that is more suitable  for   sequences of Laplace expansions. The following computations for a banded, square, Toeplitz matrix of Toeplitz order at least 4 illustrate some subtleties related to the use of a sequence of operators.
\begin{equation}\label{equ:1through4}
\left.  \begin{aligned}
(i) \qquad & (1;1)(1;1)M =&(1,2;1,2)M \\
(ii) \qquad & (1;1)(1;2)M =&(1,2;1,2)M\\
(iii) \qquad & (1;3)(1;2)(1;2)
M=& (1,2,3; 2,3,5)M \\
(iv) \qquad & (2;3)(1;1)(1;1)M =& (1,2,4;1,2,5)M=\\
(v) \qquad & (1;5)(1;5)(1;5)M= & (1,2,3;5,6, 7)M.
\end{aligned}
\right\}
\end{equation} 

We verbally derive \eqref{equ:1through4}(iii), the proof of the others being similar and hence omitted.  If you successively, three times, remove  the first row of an $n  \times n$ matrix, $M,$  with  $n \ge 6,$ and then the first remaining row of the resulting matrix, you have equivalently removed the first 3 rows of the original matrix, $M$. Furthermore, after removing the second column of $M$, the second column of the resulting  matrix is the third column of $M$. Similarly,   after removing the second and third columns of $M$, the third remaining column of the resulting matrix is the fifth column of $M$. The resulting matrix, whether after applying the three operations on the left-hand side of (iii) to $M$ or applying the single operation on the right-hand side of (iii), is of size $n-3 \times n-3$.

Throughout the paper,  \textit{expansion} refers to a  Laplace expansion.
The  expansion of a matrix $M$ along the first row is given by  
\begin{equation}\label{equ:basicstepfamily}
    \det(M) = \displaystyle \sum_{\text{all $j$}} (-1)^{j+1} M_{1;j}  \det((1,j) M). 
\end{equation}
There is  an analogous   formula for expansion along the first column.

\section{Matrix Families}\label{sec:matrixfamilies}
Equation \eqref{equ:basicstepfamily} defines   the expansion for an individual matrix. We would like to restate \eqref{equ:basicstepfamily} in terms of a family of matrices. However, a problem arises because in \eqref{equ:basicstepfamily} the size of the underlying matrices $(1,j)M$ on the right-hand side is one less than the size of matrix $M$ on the left-hand side. To avoid degeneracy problems, it is preferred that the sizes of all matrices in an identity on a family of matrices be identical. 

One way of fixing this is to use the backward shift operator, $y,$ which operates on a sequence of numbers $\{s_n\}_{n \ge c}$ for some constant $c,$ by $y(s_{n}) =s_{n+1}.$ If matrix $m^{\{n\}}$ of size $n$ 
belongs to a family of matrices, say
$M=(m^{\{k\}}))_{k \ge 1}$, with the superscripts indicating size, then  for an arbitrary summand  on the right-hand side of  \eqref{equ:basicstepfamily} applied to the member of the family $m^{\{n\}}$ we have 
$$
(-1)^{j+1}
m^{\{n\}}_{1,j}
 det((1,j)m^{\{n\}})
=
(-1)^{j+1}
m^{\{n\}}_{1,j}
y (det((1,j)m^{\{n+1\}})).
$$
By  using the $y$ operator, the underlying matrix of the right-hand side of this last equation, $(1,j)m^{\{n+1\}},$  has size $n,$ the same size as the underlying matrix that is being expanded.  (We could have alternatively used the backward shift operator on matrices instead of numbers.)

Finally, to avoid the clutter of excess parentheses, we notationally let
$$
    y \; (det((1,j) m)) = y (1,j) m^*, 
$$
where the superscript asterisk indicating determinants is applied after the operator $(1,j)$ is applied. Since applying $(1,j)$ to a determinant has no meaning, there is no ambiguity in this.

 Using these conventions, we may now rewrite \eqref{equ:basicstepfamily} for an entire  family of matrices, $M,$  either as 
\begin{equation*}
 M = \displaystyle 
\sum_{  
\text{all $j$}
}  
 (-1)^{j+1}   M_{1,j}  y\det((1,j) M), 
\end{equation*}
or, using the superscript asterisk notation,  
\begin{equation}\label{equ:familynotation2}M^* = \displaystyle 
\sum_{\text{all $j$}  
}  
 (-1)^{j+1}   M_{1,j}  y(1,j) M^*, 
\end{equation}
which is a purely formal notation indicating that for each size $n$ matrix of the underlying family,  the resulting equation in individual determinants (if well-defined) is true.

$n,$ the index of the matrix or determinant family, typically ranges over the positive integers; however, for a variety of technical reasons, it may range over positive integers greater than some constant. The range will always be clear from the context and need not be known for the development of the theory.

\begin{example} The following should fully clarify the various subtleties in the notation. Let 
$M =\{m^{\{k\}}\}_{k \ge 1}$ with superscripts indicating the matrix size. We can reformulate  \eqref{equ:1through4} in terms of the family $M$ as follows:
\begin{equation}\label{equ:1through4F}
\left.  \begin{aligned}
(i) \qquad & (1;1)(1;1)M =&(1,2;1,2)M =& (0,0)M=M\\
(ii) \qquad & (1;1)(1;2)M =&(1,2;1,2)M =&(0,0)M=M\\
(iii) \qquad & (1;3)(1;2)(1;2)
M=& (1,2,3; 2,3,5)M \\
(iv) \qquad & (2;3)(1;1)(1;1)M =& (1,2,4;1,2,5)M=& (2;3)M\\
(v) \qquad & (1;5)(1;5)(1;5)M= & (1,2,3;5,6, 7)M.
\end{aligned}
\right\}
\end{equation} 

Continuing our discussion on \eqref{equ:1through4}(iii) as it applies to \eqref{equ:1through4F}(iii), we note that for each $k \ge 1,$ the following three matrices are equal and of size $ k$:
\begin{itemize}
\item $(2,3)(1,1)(1,1) m^{\{k+3\}}$
\item $(1,2,4;1,2,5) m^{\{k+3\}}$
\item $(2,3) m^{\{k+1\}}$
\end{itemize}
 We can further verbally illustrate this by letting $k=3.$ Then the matrices resulting either from deleting rows 1,2,4 and columns 1,2,5 from $m^{\{6\}}$ or from 
deleting row 2 and column 3 from $m^{\{4\}}$ are identical of size 3.
\end{example}

 The point of the family matrix notation introduced is that by simply stating 
 $$
 (1,2,4;1,2,5)M = (2,3)M
 $$
 we avoid the problem of having to continuously address the details of the sizes.

 \section{Basic Illustrative Example and Lemmas }\label{sec:illustrativeexample}
Because of the many new notations used, it is expositionally easier if we interweave an illustrative example with the lemmas needed to prove the main theorem.

Towards that end, for this and the next section, Sections \ref{sec:illustrativeexample}-\ref{sec:review},  let $M$ be the general, banded, square, Toeplitz family of matrices of Toeplitz order $R=3$. To avoid problems of degeneracy we assume that the family begins indexing with the member of size $2R=6$. 
\begin{equation}\label{equ:memberofsize6}
\text{The  member   $m$ of the family $M$
of size  6}=
\begin{psmallmatrix}a & b & c & 0 & 0 & 0\\ d & a & b & c & 0 & 0\\ e & d & a & b & c & 0 \\0 &e & d & a & b & c\\
0 & 0 & e & d & a &b \\
0 & 0 & 0 & e & d & a.
\end{psmallmatrix}.
\end{equation}

The recursion satisfied by the family of determinants of this matrix family was first discovered in \cite{Pentadiagonal} which inspired this paper. However, \cite{Pentadiagonal} did not present a systematic method for deriving the recursion satisfied by the corresponding family of determinants,  which is generalizable to other families of banded, square, Toeplitz matrices.

This paper presents a systematic procedure for obtaining the recursion; the algorithm introduced consists of a sequence of expansions, with the first expansion applied to each member of $M.$ Each expansion
introduces new matrix families and equations in the determinants of matrix families.  We store the equations in a queue, 
\textbf{QEquations}, and we store the new matrix families, which must be further expanded, in a queue, \textbf{QTodo.}  
  The process is organized by initializing \textbf{QTodo} with $M$ and setting \textbf{QEquations} to empty. 

\textbf{Expansion 1.} We expand all members of  $M$ across the first row using \eqref{equ:familynotation2}. The resulting equation in determinants
of the matrix families is stored in \textbf{QEquations}.
\begin{equation}\label{equ:eq1}
	M^*  = ay (1;1)M^*  - b y (1;2)M^* + cy(1;3)M^*= 
    ay M^* - by (1;2)M^* + cy (1;3)M^*.
\end{equation}

Equation \eqref{equ:eq1} uses the following identity on the family of determinants:$(1,1)M^* = M^*$. Equation \eqref{equ:1through4F}(i) is a similar equation. 
Verbally, this identity  observes that removing the first row and column of a size $n$ Toeplitz matrix results in a size $n-1$ Toeplitz matrix from the same family. Equation \ref{equ:1through4F})(iv) shows another similar identity: Removing rows 1,2, and 4 and columns 1,2, and 5 of a size $n$ Toeplitz matrix is equivalent to removing row 2 and column 3 of the size $n-2$ Toeplitz matrix of the same family.     Using the notation conventions of \eqref{equ:AB}, we can formulate the underlying commonalities among these identities. 

\begin{lemma}[Reduction] (a)
If for some $s, 1 \le s \le R-1,
$ 
\begin{equation}\label{equ:defofs}
A_i=i=B_i =i, 1 \le i \le s,
\end{equation}
then 
 $(A;B) = (A',B'),$ \text{ with }  \begin{equation}\label{equ:howreduction}
 C'_{i-s}=C_i - s, \text{ for } C \in \{A,B\} \text{ and } s+1 \le i \le R.
 \end{equation} \\
 (b) If $A_i=i=B_i, 1 \le i \le R,$ then $(A,B)=(0,0).$
 \end{lemma}

 \begin{example} (a) Applying the lemma with $s=2$   to the operator $(1,2,4;1,2,5)$ proves \eqref{equ:1through4F}(iv). \\
 (b) Letting $R=2$ proves
\eqref{equ:1through4F}(i).\end{example}
 
If the hypotheses of the Reduction lemma hold,   we will say that $(A;B)$ is \textit{reducible} to $(A',B')$ or, equivalently, that $(A',B')$ is a \textit{reduction} of $(A;B)$. 
Similarly, the statement that $(A;B)$ is \textit{not reducible} or \textit{irreducible} means that the hypotheses of the Reduction lemma do not hold.

Equation \eqref{equ:eq1} introduces two new matrix families, $(1;2)M, (1;3)M$ and   is therefore placed in \textbf{QTodo}. Matrix families are stored in \textbf{QTodo} as operators, which operate on $M. $                
 
To complete the bookkeeping for Expansion 1, we let 
\begin{equation}\label{equ:eq1E1}
	E(1) = \{(1;2), (1;3) \}
\end{equation} 
indicate the irreducible operators introduced in Expansion 1, and let
\begin{equation}\label{equ:eq1A1}
	E_{3,1}= \{2,3 \}
\end{equation} 
indicate the column components   of all operators in $E(1);$ the 3 and 1 in the subscript of $E$ refer to the Toeplitz order and index of the expansion, respectively.  These sets are used in the proof of the main theorem. 

\textbf{Expansion 2.} The algorithm now requires us to expand each matrix family in \textbf{QTodo} along their first row. To accomplish this, we apply 
each of the operators $(1,k), k \in \{1,2,3\}$ to each of the operators in $E(1) $.

First, we expand $(1;2)M$, obtaining the following equation in determinant families:
\begin{multline}\label{equ:eq2a}
	(1;2)M^* = d y(1;1)(1;2)M^*- by(1;2)(1;2)M^*+cy(1;3)(1;2)M^* \\= dy M^* - by (1,2;2,3) M^* +cy(1,2;2,4)M^*.
\end{multline}
The simplification $dy(1;1)(1;2)M^*=dy M^*$ is in fact
\eqref{equ:1through4F}(ii). The other simplifications in \eqref{equ:eq2a} can be derived using the same reasoning used to derive \eqref{equ:1through4F}(i)-(v).   	

Similarly, we expand
\begin{multline}\label{equ:eq2b}
	(1,3)M^* = dy (1;1)(1;3)M^* - ay (1;2)(1;3)M^* + cy (1;3)(1;3) M^*\\= dy (1,2)M^*-ay(1,2;2,3)M^*+ cy (1,2;3,4)M^*,
\end{multline}
which results from the simplifications $(1;1)(1;3)M^*=(1,2;1,3)M^*=(1;2)M^*,$  $(1;2)(1;3)M^*=(1,2;2,3)M^*,$ and $(1;3)(1;3)M^*=(1,2;3,4)M^*$ which follow either from the Reduction lemma or can be derived using the same reasoning used to derive \eqref{equ:1through4F}(i)-(v).

The irreducible operators introduced in Expansion 2 are
\begin{equation}\label{equ:eq2E2}
	E(2) = \{(1,2;2,3), (1,2;2,4),(1,2;3,4) \}.
\end{equation} 
The corresponding column components are 
\begin{equation}\label{equ:eq2A2}
	E_{3,2}=	  \{(2,3), (2,4),(3,4) \}.
\end{equation} 

To complete the bookkeeping for this step: (i) the matrix families based on the operators in $E(1)$ are removed from \textbf{QTodo}, (ii)  the operators in $E(2)$ are placed in \textbf{QTodo}, and (iii) equations
 \eqref{equ:eq2a} and \eqref{equ:eq2b} are added to \textbf{QEquations}. 

\textbf{Expansion 3.} Prior to performing Expansion 3, we  note that if we successively perform $R-1$ operations on any member of the family $M,$ with each operation removing the top row, then the first column of the resulting matrix has $e$ in its upper-left corner with the remainder of the column 0. Formally, we can state the following lemma.

\begin{lemma}\label{lem:rminus1} Let $M'$ be an arbitrary family of matrices of Toeplitz order $R \ge 2,$ and let $m'$ be a member of it. Applying to $m'$ successively $R-1$ times operators of the form $(1,b),$ with $1 \le b \le R,$ results in a matrix   whose first column   is 0 except possibly for the upper left corner which has $m'_{R,1}.$  \end{lemma} 

For later on, in the proof of the main theorem, we also include the following lemma. whose notation and formulation follow the conventions established in \eqref{equ:AB}. 

\begin{lemma}\label{lem:bge2} Let $M'$ be an arbitrary family of matrices of Toeplitz order $R \ge 2,$ and let $m'$ be any member of it. Further, let $(A;B)$ indicate the equivalent operator resulting from applying, to $m',$  $s$ times, $1 \le s \le R-1,$  operators of the form $(1,b),1 \le b \le R$. Then \\
(a)  $A=(1,2,\dotsc,s);$   \\
(b) If additionally, $(A;B)$ is irreducible, then $B_1 \ge 2;$\\
(c) If (a) and (b) hold and  $k$ satisfies $1 \le k \le R,$ then $(1,k)(A;B)$ is reducible if and only if $k=1$. 
\end{lemma}
\begin{proof} (a) Part (a)  is simply the observation that if you successively remove the first row of a matrix $s$ times then equivalently you have removed the first $s$ rows.

(b) To prove the assertion about $B_1,$ we assume to the contrary that $B_1=1$ and derive a contradiction. By Part (a), $A_1=1.$ But if $A_1=1=B_1$
then by the Reduction Lemma, $(A;B)$ is reducible, contradicting the hypothesis that it is irreducible. This contradiction proves that our assumption $B_1=1$ is incorrect; in other words, $B_1 \ge 2$.

(c) Part (c) follows from our definition of reducible and the fact that $A_1=1, B_1 \neq 1$.
\end{proof}

\begin{example} Lemma \ref{lem:bge2}(a) is   illustrated by \eqref{equ:1through4F} (i)-(v).
\end{example} 
 
At Expansion 3, \textbf{QTodo} has the three operators listed in \eqref{equ:eq2A2}.  Therefore, it follows from
 Lemma \ref{lem:rminus1} that at Expansion 3 we may expand each member of \textbf{QTodo} along the first column
which involves one non-zero entry, which by\eqref{equ:memberofsize6} equals $e.$ Doing so, we obtain the following identities in the family of determinants of $ M$:
\begin{equation}\label{equ:eq3}
\left.  \begin{aligned}
	(1,2;2,3)M^*&=ey(1;1)(1,2;2,3)M^* &=  ey(1,2,3;1,2,3)M^* &=   eyM^*			\nonumber \\ 
    (1,2;2,4)M^* &=
	ey(1;1)(1,2;2,4)M^* &=  ey(1,2,3;1,2,4)M^* &=  ey(1;2)M^*		\\
    (1,2;3,4)M^* &=  
	ey(1,1)(1,2;3,4)M^* &=  ey(1,2,3;1,3,4)M^* &=   ey(1,2;2,3)M^*.	\nonumber
\end{aligned}
\right\}
\end{equation}

\begin{comment} The reduced operators - $(1;2)$ and $(1,2;2,3)$ - appearing on the right-hand side of the identities in \eqref{equ:eq3} have been encountered in previous expansion steps (\eqref{equ:eq1A1},\eqref{equ:eq2A2} and  therefore, at Expansion Step 3, further identities are not added to \textbf{QTodo}. But then
\textbf{QTodo} is empty and consequently the process has
successfully converged (terminated). \end{comment}

 Expansion Step 3 is completed by adding the three identities in \eqref{equ:eq3} to \textbf{QEquations}.

 \section{Review of Solving Simultaneous Equations of Determinant Families}\label{sec:review}
\textbf{QEquations} now has 6 equations. The methods and underlying theorem for solving this system are presented in   \cite{Proof}. However, for the sake of completeness, we briefly review the underlying principles of the solution process. Two techniques, \textit{elimination by substitution} and \textit{multiplication by operators} are used to solve:  

\textbf{Elimination by Substitution.} We may start the solution process with \eqref{equ:eq1}, the equation resulting from the first expansion; we may then eliminate $(1;2)M$ and $(1;3)M$ from \eqref{equ:eq1}   using
\eqref{equ:eq2a} - \eqref{equ:eq2b}; we may further  eliminate the remaining operators    using \eqref{equ:eq3}. These eliminations by substitution yield the following end result.
\begin{equation}\label{equ:multiplysubstitute}
M^* = ay M^* -bd y^2 M +b^2 e y^3 M^*-ace y^3 M^* + c^2 e^2 y^4 M^* - bce y^3 (1;2)M^* + cdy^2 (1;2)M^*. 
\end{equation}

 \textbf{Operator Multiplication.} As can be seen, the substitution technique by itself  cannot eliminate all the operators, as $(1;2)$ remains. However, starting with \eqref{equ:eq2a} and simplifying using \eqref{equ:eq3} we obtain,
\begin{equation*} 
(1;2)M^* = dy M^* - bey^2 M^* + ce y^2(1;2)M^*.
\end{equation*}
We can collect like terms to obtain the equivalent identity
\begin{equation}\label{equ:multiplysubstitute2}
(1-ce y^2) (1;2) M^* = dy M^* - be y^2 M^*.
\end{equation}

Equation \eqref{equ:multiplysubstitute2} motivates
 operating on the left and right-hand sides of \eqref{equ:multiplysubstitute} by the operator $(1-ce y^2),$ because this operation will then allow    the elimination of $(1;2)M$ using \eqref{equ:multiplysubstitute2}. 
 
Upon completing this and gathering like terms,  we   re-derive the result of \cite{Pentadiagonal}:   the family of determinants, $M,$ satisfies the order 6 recursion:
$$
G_n - a G_{n-1} + (bd - ce) G_{n-2} + (2ace - b^2e - cd^2) G_{n-3} + 
 ce (bd - ce) G_{n-4}- ac^2e^2 G_{n-5}
 + c^3e^3 G_{n-6}=0.
$$

\section{ Pseudocode and the Main Theorem}\label{sec:pseudocode}

The pseudocode for this Expand procedure is as follows:

\begin{verbatim}
EXPAND PROCEDURE  (The procedure takes a matrix family M of Toeplitz order, R > 1,
and produces, as needed, a set of new matrix families and 
a set of equations in the determinants of the matrix families. 
New matrix families are indicated by matrix operators applied to M. 
Throughout this procedure, the instruction to reduce refers to Lemma 4.1.)
INITIALIZE:
    QTodo = {M} 
    QEquations = { }
    Expansionindex=1

WHILE Not IsEmpty(QTodo)
    FOR EACH q in QTodo
        DeleteFrom(QTodo, q)
        If ExpansionIndex<=R-1 then 
          Expand q along first row
          For k =1 to R 
            If (1,k)q reducible then
                Reduce it
            Else place (1,k)q in QTodo
          Next
          ExpansionIndex++
        Else
            Expand q along 1st column
            Apply any possible reductions
        End if  
        Place resulting expanded equation in QEquations
    END FOR EACH
END WHILE
\end{verbatim}

  We can now unambiguously formulate the main theorem by referring to the pseudocode, which requires a particular sequence of expansions (to wit, expansion along the first row until Expansion $R-1$ when we expand along the first column). 

\begin{theorem}[Main]\label{the:main} Let $M$ be an arbitrary, banded, square, family of Toeplitz matrices of Toeplitz order $R \ge 2.$ The Expand procedure terminates after $R$ expansions. Using the techniques of elimination by substitution and operator multiplication, the solution to the resulting set of equations in families of determinants can then be solved   to yield the recursion satisfied by this family.
\end{theorem}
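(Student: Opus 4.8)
The plan is to track the operators that can appear in \textbf{QTodo} at each expansion step and show that after exactly $R$ expansions the queue is empty. I would argue by induction on the expansion index $j$, proving the following invariant: at the start of Expansion $j$ (for $1 \le j \le R-1$), every operator in \textbf{QTodo} has the form $(1,2,\dotsc,j-1; B)$ for some strictly increasing column set $B = (b_1,\dotsc,b_{j-1})$ with $2 \le b_1 < \dotsb < b_{j-1} \le R+j-2$ in reduced form, and moreover no such operator is reducible. The base case $j=1$ is Expansion 1 applied to $M$ itself, as in the Illustrative Example; the sets $E(1)$ and $E_{R,1}$ record exactly the non-reducible operators produced. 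For the inductive step, I would apply each operator $(1;k)$, $1 \le k \le R$, to a generic element $(1,2,\dotsc,j-1;B)$ of \textbf{QTodo}, use Reduction Lemma(b) to conclude the new row set is $(1,2,\dotsc,j)$, and use Reduction Lemma(a)/(d) to show that the only operators that survive (are not reducible) are exactly those whose reduced column set has first entry $\ge 2$; these have the claimed form with $j$ in place of $j-1$. The bound on the largest column index follows by tracking how the $(1;k)$ operation shifts columns, exactly as in computations (i)--(iv) of Section~\ref{sec:notation}.

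Once the invariant is established, at the start of Expansion $R-1$ every operator in \textbf{QTodo} has row set $(1,\dotsc,R-2)$, and after that expansion every operator has row set $(1,\dotsc,R-1)$; hence by Reduction Lemma(c) every matrix family in \textbf{QTodo} at the start of Expansion $R$ is upper triangular. Expansion $R$ therefore expands each such family along the first column, which by the banded Toeplitz structure has a single nonzero entry $M_{R,1}$ in row $R$ (all lower entries vanish since $R$ is the Toeplitz order). I would then show that applying $(1;1)$ to $(1,2,\dotsc,R-1;B)$ yields row set $(1,2,\dotsc,R)$, which by Reduction Lemma(a) reduces to $(0;0)$ on the row side, so the resulting operator reduces to one with strictly fewer than $R-1$ rows — in fact to an operator of the form $(1,\dotsc,\ell;B'')$ with $\ell \le R-2$, which is precisely an operator that appeared (in reduced form) at some earlier expansion. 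This is the ``successful reduction'' phenomenon illustrated in~\eqref{equ:eq3}: nothing new is added to \textbf{QTodo}, so after Expansion $R$ the queue is empty and the procedure terminates.

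The final clause — that the resulting system of equations in determinant families can be solved for the recursion — follows from the solution method reviewed in Section~\ref{sec:review}: \textbf{QEquations} is a finite linear system over the ring of polynomials in $y$ (with coefficients the matrix entries) in the finitely many determinant families $\{(A;B)M^*\}$ that were ever enqueued, and the combination of elimination by substitution and operator multiplication (clearing denominators of the form $1 - (\text{polynomial in }y)$) expresses $M^*$ as a single polynomial-in-$y$ multiple of $M^*$, i.e.\ a linear recursion; I would cite \cite{Proof} for the termination and correctness of that elimination step, noting that finiteness of the operator set (just established) is exactly what makes it applicable.

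\textbf{Main obstacle.} The delicate point is the inductive step's bookkeeping: verifying that the column indices produced by composing $(1;k)$ with a reduced operator $(1,\dotsc,j-1;B)$ stay within the range $[1,R]$ as raw indices (so that Reduction Lemma parts (a)--(c) genuinely apply with the stated Toeplitz order) and that, after reduction, the surviving column sets are exactly the strictly increasing tuples in the claimed range with no repetitions or gaps forced. One must also check the edge cases $R=1$ and $R=2$ separately, where the ``expand along the first row'' phase is empty or nearly so, and confirm the procedure still terminates in $R$ expansions. Everything else is a routine unwinding of the notation in Section~\ref{sec:notation} together with the Reduction Lemma.
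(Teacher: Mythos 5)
Your proposal follows essentially the same route as the paper: induction on the expansion index with the invariant that operators in \textbf{QTodo} have row set $(1,\dotsc,j-1)$ and strictly increasing column sets bounded by $R+j-2$, the same appeals to parts (a)--(d) of the Reduction Lemma, the same final step (upper triangularity, expansion along the first column, reduction under $(1;1)$), and the same delegation of the equation-solving to Section~\ref{sec:review} and \cite{Proof}.

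There is, however, one point where your stated invariant is strictly weaker than what your last step uses. You formulate the invariant as a containment (``every operator in \textbf{QTodo} has the form $(1,\dotsc,j-1;B)$ with $2 \le b_1 < \dotsb < b_{j-1} \le R+j-2$''), but at Expansion $R$ you conclude that the reduction of $(1;1)(1,\dotsc,R-1;B)$ is ``precisely an operator that appeared at some earlier expansion.'' Having the right \emph{form} does not by itself imply having \emph{appeared}; that implication needs the converse inclusion, and the paper gets it by taking as its induction hypothesis the full equality $E_{R,k} = \{(b_1,\dotsc,b_k) : 2 \le b_1 < \dotsb < b_k \le R+k-1\}$, i.e.\ that every tuple in the range actually occurs. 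This matters not for bare termination (reducibility alone keeps \textbf{QTodo} from growing) but for the second clause of the theorem: if a final reduction landed on a family that had never been enqueued, the system in \textbf{QEquations} would contain an unknown with no defining equation. You should either strengthen your invariant to the equality, as the paper does, or argue directly that each shifted tuple $(B_s-s,\dotsc,B_{R-1}-s)$ was generated. A second, minor slip: the reduced operator need not satisfy $\ell \le R-2$; the matching initial run can have length $s=1$, giving $\ell = R-1$ (e.g.\ $(1;1)(1,2;3,4)M$ reduces to $(1,2;2,3)M$ in the $R=3$ example), which is still an earlier expansion, so the conclusion survives but the stated bound does not.
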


\section{A Preliminary Lemma }

The main theorem is proven using Lemma
\ref{lem:fundamental} stated below. Prior to stating it, recall our notation:  $E_{R,k}$ contains all column components $B,$ of operators $(A;B)$ used in Expansion step $ k$. For $R=3,$ the sets  
 $E(k)$ and $E_{R,k}$  
 are illustrated for $k=1 $ in \eqref{equ:eq1E1} - \eqref{equ:eq1A1} and for 
$k=2$ in \eqref{equ:eq2E2}–\eqref{equ:eq2A2}. The sets $E_{3,k}, 1 \le k \le 2,$ show a distinct pattern, which is formulated in the following lemma.    

\begin{lemma}\label{lem:fundamental}For any fixed $R \ge 2,$ and for $k, 1 
\le k \le R-1,$  
\begin{equation}\label{equ:toprove}
    E_{R,k} = \{(b_1,\dotsc,b_k) :
        2 \le b_1 <b_2<\dotsc < b_k \le
        R+k-1\}.
\end{equation}
\end{lemma}

 We divide the proof of \eqref{equ:toprove} into two cases:
\begin{equation}\label{equ:toprovein}
    E_{R,k} \subset \{(b_1,\dotsc,b_k) :
        2 \le b_1 <b_2<\dotsc < b_k \le
        R+k-1\},
\end{equation}
and
\begin{equation}\label{equ:toproveout}
    E_{R,k} \supset \{(b_1,\dotsc,b_k) :
        2 \le b_1 <b_2<\dotsc < b_k \le
        R+k-1\}.
\end{equation}

Both proofs are by induction. We begin with the proof of \eqref{equ:toprovein}.  

\begin{proof}
 \textit{Base Case.} For  arbitrary fixed $R \ge 2,$  the pseudocode requires that the first expansion be along the first row by applying the operators $(1,m), 1 \le m \le R$. By the Reduction lemma, $(1;1)=(0;0)$ is reducible while $(1;m),m \ge 2$ is not reducible. Since at the first expansion $(1;2), (1;3), \dotsc, (1;R)$ are not reducible, we have 
\begin{equation}\label{equ:toprovebase}
    E_{R,1} = \{2,\dotsc,R\}.
\end{equation}

\begin{example}  Equation \eqref{equ:eq1A1} illustrates this for $R=3.$ 
\end{example}

For later use, we note that \eqref{equ:toprovebase} is an exact equality and therefore can also serve as a base case in the proof of \eqref{equ:toproveout}.

\textit{Induction assumption.} For an induction assumption, we assume \eqref{equ:toprovein} to be true for fixed $R \ge 2$ and for any $k,$ with $1 \le k \le m \le R-2,$ for some $m \ge 1 $. The base case shows that for $m=1$ for arbitrary $R \ge 3$, the induction assumption is satisfied. (For $R=2$ it is satisfied vacuously).

\textit{Induction step.} With the notation as in the induction assumption, we must show that \eqref{equ:toprovein} holds for the case $m+1$.

However, since $m \le R-2,$ the pseudocode requires, at Expansion step $m+1,$ expansion along the first row, which is accomplished  by applying the operators $(1,p), 1 \le p \le R,$ to the current matrix, say $(A;B)M$. By Lemma \ref{lem:bge2}, since $(A;B)$ is in \textbf{QTodo}, and hence, by the definition of \textbf{QTodo} is irreducible, we have
$$A=(1,2,\dotsc,m),$$ 
and
$$
    2 \ge B_1.
$$
Moreover, by the induction assumption we have
$$
 2 \le B_1 \le \dotsc \le B_m \le R+m-1.
$$

Select some $p,$ $1 \le   p \le R,$  apply the operator $(1,p)$  to the matrix family $(A;B)M$ in \textbf{QTodo}, and let
$(A';B')=(1,p)(A;B).$ 
By Lemma \ref{lem:bge2}(c), $(A',B')$ is irreducible if $p \ge 2 $.  Then again by
Lemma \ref{lem:bge2}, we have
$$
A'=(1,2,\dotsc,m+1), \qquad
B'_1 \ge 2.
$$

Therefore, to complete the induction step we must show
$$
    B'_{k+1} \le R+(m+1)-1 = R+m.
$$
However, $B$ has length $m;$ in other words,   $B$   eliminates $m$ columns from $ M$. Since $p \le R,$ it follows that the $p$-th remaining column in $(A';B')M = (1,p)(A;B)M$ must lie in the first $R+m$ columns of $M$ as was to be shown.   
\end{proof}

To complete the proof of \eqref{equ:toprove}, we must also prove 
\eqref{equ:toproveout}. This can be done constructively. Prior to stating the proof, we provide an illustrative example.

\begin{example}\label{exa:constructive}

For this example, let
$M$ be an initial matrix family, 
let $R=10,$ and suppose the current expansion step is  $m=3$.  We further suppose that 
\textbf{QTodo} has the operator $(A;B)=(1,2,3;b_1=3,b_2=5,b_3=7).$ The following identity  
\begin{equation}\label{equ:constructive}
(1,6)(A;B) = (1,2,3,4;3,5,7,9)
\end{equation}
is easily justified verbally as follows:
If we have already removed three columns from $M$ then the 6-th column of the resulting matrix is the 9-th column of the original matrix.  Generalizing, if  at expansion step $m, 1 \le m \le R,$ 
 \textbf{QTodo} has the matrix $(A;B) M,$ and using the notation of \eqref{equ:AB}, we are given a $t, B_m < t \le R,$ then if we wish to find an operator $(1,x)$ such that $(1,x)(A,B) = 
 (1,2, \dotsc,k+1;B_2,\dotsc,B_m,B_{m+1}=t),$ then it suffices to let $x=t-m$.  
 \end{example}

 Using this fact, we proceed to  prove \eqref{equ:toproveout}.

 \begin{proof} The base case is given by
 \eqref{equ:toprovebase}; for an induction assumption we assume \eqref{equ:toproveout} true for fixed $R \ge 2$ and for any $k,$ with $1 \le k \le m \le R-2,$ for some $m \ge 1 $. For the induction step, we must show that \eqref{equ:toproveout} is true for the case $m+1$; but this has already been shown above in Example \ref{exa:constructive}.

This completes the proof of 
\eqref{equ:toproveout}, which, when coupled with the proof of \eqref{equ:toprovein}, completes the proof \eqref{equ:toprove}.
\end{proof}

\begin{example} Equation \eqref{equ:1through4F}(v) illustrates  that applying the operator $(1,R)$ $k$ times to a matrix (with $1 \le k \le R) $ is equivalent to the operator 
$(1, \dotsc,k; 
R, R+1, \dotsc, R+k-1),$ showing that the upper bound \eqref{equ:toprove} is the best possible.
\end{example}

\section{ Proof of the Main Theorem}\label{sec:proofofmaintheorem} By the pseudocode, at Expansion Step $R-1,$ the items in \textbf{QTodo} are irreducible. 
Hence, by Lemma \ref{lem:bge2},
$A=(1,2,\dotsc,R-1),$ and $B_1 \ge 2.$

At Expansion step $R,$ the pseudocode requires expansion along the first column, which, by  Lemma \ref{lem:rminus1}, has one non-zero entry. Thus, the expansion at Step $R$ has one summand. Therefore, we consider $(1;1)(A;B)=(A';B').$ 
We claim
\begin{equation}\label{equ:AprimeatR}
A'=(1,\dotsc,R)
\end{equation}
and
\begin{equation}\label{equ:BprimeatR}
B'=(1, B_1,\dotsc,B_{R-1}).
\end{equation}
Equation \eqref{equ:AprimeatR} follows from Lemma \ref{lem:bge2}. To see \eqref{equ:BprimeatR}, note that $B_1 \ge 2$ and   by our conventions stated 
in \eqref{equ:AB},   $B_1 \le B_2 \le \dotsc \le B_{R-1}.$ Consequently, column 1 was not deleted by $(A;B),$ and therefore  $(1;1)(A;B)$ will simply delete both column 1 and also the columns listed in $B.$

By \eqref{equ:AprimeatR} and \eqref{equ:BprimeatR}, $A'_1=1=B'_1 $. Therefore, the hypotheses of the Reduction Lemma hold and $(A';B')$ can be reduced to an equivalent operator with say $(A',B')=(A'',B'').$ To complete the proof of the main theorem we need only show that $(A'';B'')$ was encountered in a previous expansion step and therefore will not enter \textbf{QTodo}. For if we show that, then \textbf{QTodo} will be empty, because there are no new entries. In other words, the process terminates, proving convergence.

To prove that $(A'';B'')$ was previously encountered we  
use \eqref{equ:defofs}-\eqref{equ:howreduction} in the Reduction lemma, which describes how reductions take place. There are two cases 
to consider.

\textit{Case 1.} Suppose $A'_i = B'_i$ for $1 \le i \le R.$ Then by Part (b) of  the Reduction Lemma   $(A';B')$ may be reduced to $(A''';B'')=(0;0)$, which has been previously encountered.

\textit{Case 2.} There is a largest $s < R$ such that $A'_i=B'_i, 1 \le i \le s,$ but $A'_{s+1} \neq B'_{s+1} $.  
By Lemma \ref{lem:bge2},   $A'_i=i, 1 \le i \le R,$ and 
hence $A'_{s+1} = s+1. $
Consequently, by \eqref{equ:howreduction} of the Reduction Lemma,  $A''_1 = A'_{s+1}-s = s+1-s=1.$
By the hypothesis of this Case 2, $B'_{s+1} \neq A'_{s+1}$ and therefore, since $A'_{s+1}=s+1,$ we must have $B'_{s+1} > s+1 $.
But then by \eqref{equ:howreduction}, $B''_1 = B'_{s+1} - s  \ge s+2-2=2$.

A similar argument shows that $B''_{R-s} < R+(R-s)-1.$  Hence, by \eqref{equ:AB}, $2 \le B''_k \le 2R-s-1, \text{ for } 1 \le k \le R-s.$
By Lemma \ref{lem:fundamental}, it then follows that $(A'';B'')$ is a member of $E(R-s ),$ proving that it was previously encountered.
This completes the proof of the main theorem.

\begin{example} Equation \eqref{equ:eq3} showing how how all items in \textbf{QTodo} are reduced at Expansion step $R=3$ to previously encountered operators, nicely illustrates the arguments just presented for completion of the proof of the main theorem.
\end{example}

\begin{remark} The main theorem also mentions that the resulting set of equations in determinants can be solved. The method of solving   the equations in \textbf{QEquations} is presented in \cite{Proof} and was reviewed in Section
\ref{sec:review}.  
\end{remark}

\section{Comparison of Approaches}\label{sec:comparison}

The Toeplitz order, $R=3,$ illustrative example is solved in this paper in Section \ref{sec:illustrativeexample}, was treated in \cite{Pentadiagonal}, and can also be solved using the software presented in the arXiv version of \cite{Proof}. 

The major points of contrast and  comparison between these three methods are as follows: 
\begin{enumerate}
\item [(i)] The methods presented in this paper are applicable to the general, Toeplitz, square, banded family of matrices while    \cite{Pentadiagonal} simply performed some ad-hoc matrix operations to obtain the recursion, without those operations being generalizable to other determinant families. 
\item [(ii)] The identification of previously encountered matrix families in this paper is done through the Reduction lemma, while   the algorithm presented in \cite{Proof} requires manually checking matrices at each step to verify prior encounters. 
\item [(iii)] The main theorem of this paper guarantees convergence after $R$ expansions; contrastively, the convergence in \cite{Proof} was simply a fortuitous accident for that particular example.
\item[(iv)] Both the methods of this paper and \cite{Proof} introduced half a dozen new matrix families, and both terminate after $R=3$ Laplace Expansions. While the idea of \cite{Proof} to expand along both rows and columns and to check for transposes seems to point to greater efficiency, this does not seem to matter in practice. 
\end{enumerate}

\section*{Disclosure statement}
The author declares no conflicts of interest.


\medskip
\noindent MSC2020: 11B39, 11C20, 65F40

\end{document}